\newtheorem{theorem}{Theorem}[section]
\newtheorem{lemma}[theorem]{Lemma}
\theoremstyle{definition}
\newtheorem{definition}[theorem]{Definition}
\newtheorem{example}[theorem]{Example}
\newtheorem{remark}[theorem]{Remark}
\newcommand{\bF}{\bar{F}}
\newcommand{\bL}{\bar{L}}
\newcommand{\bS}{\bar{S}}
\newcommand{\bV}{\bar{V}}
\newcommand{\bW}{\bar{W}}
\newcommand{\fF}{{\mathfrak F}}
\newcommand{\p}{{\mbox{$[p]$}}}
\newcommand{\pd}{{\mbox{$[p]'$}}}
\newcommand{\id}{\triangleleft}
\DeclareMathOperator{\ideq}{\unlhd}
\newcommand{\sn}{\mbox{$\triangleleft\mspace{-1.8mu}\triangleleft \medspace$}}
\newcommand{\scp}{{\mbox{$\scriptstyle [p]$}}}
\DeclareMathOperator{\cl}{Cl}
\DeclareMathOperator{\ind}{Ind}
\DeclareMathOperator{\Hom}{Hom}
\DeclareMathOperator{\End}{End}
\DeclareMathOperator{\res}{Res}
\DeclareMathOperator{\ammod}{AmMod}
\numberwithin{equation}{section}
\title[Character clusters]{Character clusters for Lie algebra modules over a field of non-zero characteristic}
\author{Donald W. Barnes}
\address{1 Little Wonga Rd.\\Cremorne NSW 2090\\Australia\\}
\email{donwb@iprimus.com.au}
\subjclass[2010]{Primary 17B10}
\keywords{Lie algebras,  saturated formations, induced modules}
\begin{document}

\begin{abstract} For a Lie algebra $L$ over an algebraically closed field $F$ of non-zero characteristic, every finite dimensional $L$-module can be decomposed into a direct sum of submodules such that all composition factors of a summand have the same character.  Using the concept of a character cluster, this result is generalised to fields which are not algebraically closed.  Also, it is shown that if the soluble Lie algebra $L$ is in the saturated formation $\fF$ and if $V, W$ are irreducible $L$-modules with the same cluster and the $p$-operation vanishes on the centre of the $p$-envelope used, then $V,W$ are either both $\fF$-central or both $\fF$-eccentric.  Clusters are used to generalise the construction of induced modules.

\end{abstract}

\maketitle
\section{Introduction}
Throughout this note, $L$ is a finite dimensional Lie algebra over the field $F$ of characteristic $p \ne 0$.  Let $V$ be a finite dimensional $L$-module.  To define a character for $V$, we must embed $L$ in a $p$-envelope $(L^p, \p)$.  The action $\rho$ of $L$ on $V$ can be extended to $L^p$.  (See Strade and Farnsteiner \cite[Theorem 5.1.1]{SF}.)

\begin{definition}  A character for $V$ is a linear map $c: L^p \to F$ such that for all $x \in L^p$, we have 
$$\rho(x)^p - \rho(x^{\scp}) = c(x)^p1.$$
\end{definition}
 Not every module has a character, but if $F$ is algebraically closed and $V$ is irreducible, then $V$ has a character.  (See Strade and Farnsteiner \cite[Theorem 5.2.5]{SF}.)  The following is Strade and Farnsteiner, \cite[Theorem 5.2.6]{SF}.

\begin{theorem} \label{SFdecomp}Suppose that $F$ is algebraically closed and let $(L,\p)$ be a restricted Lie algebra over $F$.  Let $V$ be a finite dimensional $L$-module.  Then there exist $c_i:L \to F$ and submodules $V_i$ such that $V = \oplus_i V_i$ and every composition factor of $V_i$ has character $c_i$.
\end{theorem}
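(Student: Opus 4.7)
The plan is to exploit the fact that for each $x \in L$, the element $x^p - x^{\scp} \in u(L)$ lies in the centre of the restricted enveloping algebra. Set $T_x := \rho(x)^p - \rho(x^{\scp}) \in \End(V)$. The first task is to verify, via the Jacobson $p$-power formula, that the map $T \colon L \to \End(V)$ is additive and satisfies $T_{\lambda x} = \lambda^p T_x$: the correction terms appearing in the associative power $(x+y)^p$ inside $u(L)$ are the very same iterated-bracket polynomials that appear in $(x+y)^{\scp}$, so they cancel in the difference. Centrality of $x^p - x^{\scp}$ in $u(L)$ then forces each $T_x$ to commute with every element of $\rho(L)$, and in particular the family $\{T_x \mid x \in L\}$ is mutually commuting inside $\rho(u(L))$.

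Next, fix a basis $x_1, \dots, x_n$ of $L$. The commuting operators $T_{x_1}, \dots, T_{x_n}$ admit a simultaneous generalized-eigenspace decomposition
\[
V \;=\; \bigoplus_{\alpha \in F^n} V_\alpha,
\]
where on $V_\alpha$ each $T_{x_i} - \alpha_i$ acts nilpotently; this step uses only that $F$ is algebraically closed, so that every eigenvalue lies in $F$. Because each $T_{x_i}$ commutes with $\rho(L)$, every summand $V_\alpha$ is an $L$-submodule, and so this is a decomposition of $V$ as an $L$-module.

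It remains to attach one character to all composition factors of a given $V_\alpha$. Let $U$ be such a composition factor. Each $T_{x_i}$ is central in $\rho(u(L))$, so by Schur's lemma over the algebraically closed field $F$ it acts on the irreducible $U$ as a scalar, necessarily equal to the generalized eigenvalue $\alpha_i$. For arbitrary $x = \sum \lambda_i x_i$, the $p$-semilinearity of $T$ established above gives $T_x|_U = \sum \lambda_i^p \alpha_i \cdot 1_U$. Since $F$ is perfect, pick $\beta_i \in F$ with $\beta_i^p = \alpha_i$, and define the linear map $c_\alpha \colon L \to F$ by $c_\alpha(x) := \sum \lambda_i \beta_i$; then $c_\alpha(x)^p = \sum \lambda_i^p \alpha_i$, so $U$ has character $c_\alpha$, and this $c_\alpha$ is the same for every composition factor of $V_\alpha$.

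The principal obstacle is the opening step, namely the $p$-semilinearity of $T$: one needs the Jacobson correction polynomials in the associative power $(x+y)^p$ in $u(L)$ to match exactly those in the restricted power $(x+y)^{\scp}$ in $L$. Once this is in hand, the rest is standard commuting-operator decomposition combined with Schur's lemma, with algebraic closure supplying both the required eigenvalues and the $p$-th roots.
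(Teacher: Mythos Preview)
Your argument is correct. Note, however, that the paper does not supply its own proof of this theorem: it is quoted verbatim as \cite[Theorem~5.2.6]{SF}, so there is no in-paper proof to compare against. That said, your outline is precisely the standard argument behind the cited result, and its ingredients appear elsewhere in the paper: the $p$-semilinearity of $T$ (your ``principal obstacle'') is exactly Lemma~\ref{psemi}, and the description of each summand as the space on which all $\phi_x - c(x)^p 1$ act nilpotently is recorded in Remark~\ref{Lonly}. One small stylistic point: rather than fixing a basis and decomposing with respect to the finitely many $T_{x_i}$, one can phrase the decomposition intrinsically as the Fitting-type decomposition with respect to the commutative subalgebra of $\End(V)$ generated by all $T_x$; your basis-dependent version is equivalent by the $p$-semilinearity you already established, and the paper itself (Remark~\ref{Lonly}) makes the same reduction to a basis.
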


This decomposition in terms of characters is functorial and is clearly useful.  In this note, the concept of a character cluster is used to obtain a similar result which does not require the field to be algebraically closed.
As a further application, it is shown that, if the soluble Lie algebra $L$ is in the saturated formation $\fF$ and $V, W$ are irreducible $L$-modules with the same cluster and the $p$-operation vanishes on the centre of the $p$-envelope used, then either both $V,W$ are $\fF$-central or both are $\fF$-eccentric.  Over a perfect field, clusters are used to generalise the construction of induced modules.

To simplify the exposition, we work with a restricted Lie algebra $(L,\p)$.  To apply the results to a general Lie algebra, as is the case for characters, we have to embed the algebra in a $p$-envelope, and the clusters obtained depend on that embedding.

\section{Preliminaries}
In the following, $(L,\p)$ is a restricted Lie algebra over the field $F$, $\bF$ is the algebraic closure of $F$ and $\bL = \bF \otimes_F L$ is the algebra obtained by extension of the field.  A character of $L$ is an $F$-linear map $c : L \to \bF$.  If $\{e_1, \dots, e_n\}$ is a basis of $L$, then $c$ can be expressed as a linear form $c(x) = \sum a_ix_i$ for $x = \sum x_ie_i$, where $a_i \in \bF$.  If $\alpha$ is an automorphism of $\bF/F$, that is, an automorphism of $\bF$ which fixes all elements of $F$, then $c^\alpha$ is the character $c^\alpha(x) = \sum a_i^\alpha x_i$ and is called a conjugate of $c$.   We do not distinguish in notation between $c:L \to \bF$ and its linear extension $\bL \to \bF$.  We denote by $F[c]$ the field $F[a_1, \dots, a_n]$ generated by the coefficients $a_i$.  It is the field generated by the $c(x)$ for all $x \in L$ and is independent of the choice of basis. 

If $V$ is an $L$-module, then $\bV$ is the $\bL$-module $\bF \otimes_F V$.  The action of $x \in L$ on $V$ is denoted by $\rho(x)$.  The module $V$ has character $c$ if $(\rho(x)^p-\rho(x^\scp))v = c(x)^pv$ for all $x \in L$ and all $v \in V$.

In the universal enveloping algebra $U(L)$, the element $x^p -x^\scp$ is central.  (See Strade and Farnsteiner \cite{SF}, p. 203.)  For the module $V$ giving the representation $\rho$, we put $\phi_x = \rho(x)^p - \rho(x^\scp)$.  We then have $[\phi_x, \rho(y)] = 0$ for all $x,y \in L$.

\begin{lemma} \label{psemi} The map $\phi : L \to \End(V)$ defined by $\phi_x(v) = (\rho(x^p) - \rho(x^\scp))v$ is $p$-semilinear.
\end{lemma}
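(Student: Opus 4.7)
The plan is to verify the two axioms of $p$-semilinearity separately: additivity $\phi_{x+y} = \phi_x + \phi_y$ and Frobenius-twisted homogeneity $\phi_{\lambda x} = \lambda^p \phi_x$ for $\lambda \in F$. Both reduce to matching a standard identity in $U(L)$ against the corresponding axiom for the restricted structure on $L$.

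For homogeneity I would argue as follows. Inside $U(L)$ the scalar $\lambda$ is central, so $(\lambda x)^p = \lambda^p x^p$; on the other hand, one of the defining axioms of a restricted Lie algebra gives $(\lambda x)^\scp = \lambda^p x^\scp$. Applying $\rho$ to the difference and using $F$-linearity of $\rho$ yields $\phi_{\lambda x} = \rho((\lambda x)^p) - \rho((\lambda x)^\scp) = \lambda^p \rho(x^p) - \lambda^p \rho(x^\scp) = \lambda^p \phi_x$ as endomorphisms of $V$.

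For additivity the key tool is Jacobson's formula, as recorded in Strade--Farnsteiner. It furnishes universal Lie polynomials $s_i(x,y)$, $1 \le i \le p-1$, such that in $U(L)$ one has $(x+y)^p = x^p + y^p + \sum_{i=1}^{p-1} s_i(x,y)$, while the restricted Lie algebra axiom expresses $(x+y)^\scp$ via exactly the same polynomials: $(x+y)^\scp = x^\scp + y^\scp + \sum_{i=1}^{p-1} s_i(x,y)$. Subtracting these two identities in $U(L)$ makes the $s_i$-terms cancel, and applying $\rho$ then gives $\phi_{x+y} = \phi_x + \phi_y$.

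The only real obstacle is pairing up the correct form of Jacobson's formula in $U(L)$ with the corresponding axiom satisfied by the $\p$-operation, so that the Lie-polynomial correction terms cancel on subtraction. Once those two identities are placed side by side, the $p$-semilinearity of $\phi$ is immediate, and no further structural input about $V$ is needed.
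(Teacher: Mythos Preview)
Your proposal is correct and follows essentially the same route as the paper: both quote Jacobson's formula $(x+y)^p = x^p + y^p + \sum s_i(x,y)$ in $U(L)$ alongside the restricted-algebra axiom $(x+y)^\scp = x^\scp + y^\scp + \sum s_i(x,y)$, subtract so that the $s_i$ cancel, and then dispose of the scalar case via $(\lambda x)^p = \lambda^p x^p$ and $(\lambda x)^\scp = \lambda^p x^\scp$. If anything, you spell out the homogeneity step more fully than the paper, which simply declares it ``clear''.
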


\begin{proof}  In the universal enveloping algebra $U(L)$, we have 
$$(a+b)^p = a^p + b^p + \sum_{i=1}^{p-1} s_i(a,b)$$
(see Strade and Farnsteiner \cite{SF} p. 62 equation (3),) and 
$$(a+b)^\scp = a^\scp + b^\scp + \sum_{i=1}^{p-1}s_i(a,b).$$
(See Strade and Farnsteiner, p. 64 Property (3).)  Putting these together, we have 
$$(a+b)^p - (a+b)^\scp = a^p +b^p - a^\scp -b^\scp.$$
It follows that $\phi_{a+b} = \phi_a + \phi_b$.  Clearly, $\phi_{\lambda a} = \lambda^p \phi_a$.
\end{proof}

\begin{remark} \label{Lonly} In the decomposition of $\bV$ given by Theorem \ref{SFdecomp}, the summand corresponding to the character $c$ is 
$$\{v \in \bV \mid (\phi_x - c(x)^p1)^r v = 0 \text{ for some } r \text{ and all } x \in \bL\}.$$  By Lemma \ref{psemi}, we need only consider those $x \in L$, or indeed, in some chosen basis of $L$.
\end{remark}

\section{Clusters}

\begin{definition}  The  cluster $\cl(V)$ of an $L$-module $V$ is the set of characters of the composition factors of  the $\bL$-module $\bV =\bF \otimes_F V$.
\end{definition}

\begin{lemma} \label{conj} Suppose $c \in \cl(V)$.  Then the conjugates $c^\alpha$ of $c$ are in $\cl(V)$.
\end{lemma}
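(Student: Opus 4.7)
The plan is to realise each Galois automorphism $\alpha \in \mathrm{Aut}(\bF/F)$ as a permutation of the composition factors of $\bV$ that carries the character $c$ to $c^\alpha$. The natural device is the $\alpha$-semilinear bijection $\tilde\alpha : \bV \to \bV$ defined on simple tensors by $\tilde\alpha(\lambda \otimes v) = \alpha(\lambda) \otimes v$, with inverse $\widetilde{\alpha^{-1}}$. Because $\alpha$ fixes $F$, the map $\tilde\alpha$ commutes with the action of every $x \in L$, and hence with $\rho(x^{\p})$ and with $\phi_x$, even though it is only $\bF$-semilinear on $\bV$.

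First I would record the key semilinearity identity: for $\bar x = \sum_i \mu_i \otimes e_i \in \bL$ and $w \in \bV$,
$$\tilde\alpha(\bar x \cdot w) = \bar x^\alpha \cdot \tilde\alpha(w), \qquad \text{where } \bar x^\alpha = \sum_i \alpha(\mu_i) \otimes e_i.$$
From this it is immediate that if $U \le \bV$ is a $\bL$-submodule then so is $\tilde\alpha(U)$: given $\bar x \in \bL$ and $w \in U$, one has $\bar x \cdot \tilde\alpha(w) = \tilde\alpha(\bar x^{\alpha^{-1}} \cdot w) \in \tilde\alpha(U)$. Consequently $\tilde\alpha$ sends any composition series $0 = V_0 \subset V_1 \subset \dots \subset V_n = \bV$ to a composition series $0 = \tilde\alpha(V_0) \subset \dots \subset \tilde\alpha(V_n) = \bV$, inducing a bijection between the composition factors.

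Now suppose $V_i/V_{i-1}$ has character $c$. On the quotient $\tilde\alpha(V_i)/\tilde\alpha(V_{i-1})$, for any $x \in L$ and $w \in V_i$,
$$\phi_x \tilde\alpha(w) = \tilde\alpha(\phi_x w) = \tilde\alpha(c(x)^p w) = \alpha(c(x))^p \tilde\alpha(w) = c^\alpha(x)^p \tilde\alpha(w),$$
where the first equality uses that $\tilde\alpha$ commutes with $\phi_x$ (as $x \in L$), the second is the character hypothesis for $V_i/V_{i-1}$ transported to the quotient, the third is $\alpha$-semilinearity applied to the scalar $c(x)^p \in \bF$, and the fourth uses that the coordinates of $x \in L$ lie in $F$, so $c^\alpha(x) = \alpha(c(x))$. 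This shows $\tilde\alpha(V_i)/\tilde\alpha(V_{i-1})$ has character $c^\alpha$, hence $c^\alpha \in \cl(V)$.

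No deep obstacle is expected; the argument is essentially Galois descent. The only bookkeeping point to watch is that $\tilde\alpha$ transports $\bL$-submodules despite being only $\bF$-semilinear (rather than $\bF$-linear), and that the character identity is checked on $x \in L$, where $c^\alpha(x)$ simplifies to $\alpha(c(x))$, rather than on a general element of $\bL$.
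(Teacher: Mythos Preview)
Your proof is correct and is essentially the same as the paper's: the paper's map $v\mapsto v^\alpha$ (sending $\sum\lambda_i v_i$ to $\sum\lambda_i^\alpha v_i$ with respect to an $F$-basis of $V$) is exactly your $\tilde\alpha$, and the paper likewise observes that for $x\in L$ this map commutes with $\rho(x)$ and $\phi_x$, hence carries a composition factor with character $c$ to one with character $c^\alpha$. Your write-up is a bit more explicit about the semilinearity identity for general $\bar x\in\bL$, but the argument is the same Galois-descent step.
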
 

\begin{proof}  Let $A/B$ be a composition factor of $\bV$ and let $\{v_1, \dots, v_k\}$ be a basis of $V$. The action $\rho(x)$ of $x \in L$ on $V$ and so also on $\bV$ is given in respect to this basis by a matrix $X$ with coefficients in $F$. An automorphism $\alpha$ maps $v=\lambda_1v_1 + \dots \lambda_kv_k$ to $v^\alpha = \lambda_1^\alpha v_1 + \dots + \lambda_k^\alpha v_k$.  Since $X^\alpha = X$, we have that $(xv)^\alpha = xv^\alpha$.  Thus $A^\alpha, B^\alpha$ are submodules of $\bV$ and $A^\alpha/B^\alpha$ is a composition factor.  The linear map $\phi_x = \rho(x)^p - \rho(x^\scp)$ also commutes with $\alpha$.  Thus from $\phi_x(a +B) = c(x)^pa +B$, it follows that $\phi_x(a^\alpha) + B^\alpha = c^\alpha(x)^pa^\alpha + B^\alpha$.  Thus $c^\alpha \in \cl(V)$.
\end{proof}
The statement $(xv)^\alpha=xv^\alpha$ may  suggest that $A/B$ and $A^\alpha/B^\alpha$ are isomorphic.  They are not.  The map $v \mapsto v^\alpha$ is not linear, as $(\lambda v)^\alpha = \lambda^\alpha v^\alpha$.

By Lemma \ref{conj}, a cluster $\cl(V)$ is a union of conjugacy classes of characters.

\begin{definition}  A cluster $\cl(V)$ is called simple if it consists of a single conjugacy class of characters.
\end{definition}

\begin{theorem} \label{simple}  Let $V$ be an irreducible $L$-module.  Then $\cl(V)$ is simple.
\end{theorem}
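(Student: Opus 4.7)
The plan is to identify $\cl(V)$ with the set $\Hom_F(A,\bF)$ of $F$-algebra embeddings of a particular finite field extension $A/F$ into $\bF$, and then use the classical fact that $\mathrm{Aut}(\bF/F)$ acts transitively on such embeddings.

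Since $V$ is irreducible, $D:=\End_L(V)$ is a division $F$-algebra by Schur's lemma. As already noted in the paper, each $\phi_x$ commutes with every $\rho(y)$, so $\phi_x\in D$. Fixing a basis $e_1,\dots,e_n$ of $L$, set $A=F[\phi_{e_1},\dots,\phi_{e_n}]$: a commutative finite-dimensional $F$-subalgebra of the division ring $D$, hence a finite field extension of $F$. The module $V$ is faithful over $A$, so $\bV$ is faithful over the finite commutative $\bF$-algebra $A\otimes_F\bF$. This algebra decomposes as a product $\prod_\sigma B_\sigma$ of local Artinian rings each with residue field $\bF$, indexed by the homomorphisms $\sigma\in\Hom_F(A,\bF)$.

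The orthogonal idempotents of that product split $\bV=\bigoplus_\sigma W_\sigma$ with each $W_\sigma\ne 0$, and on $W_\sigma$ every $a\in A$ acts as $\sigma(a)$ plus a nilpotent endomorphism. I then manufacture a character from $\sigma$: define $c_\sigma(e_j)$ to be the unique $p$-th root of $\sigma(\phi_{e_j})$ in $\bF$ and extend $F$-linearly to $L$. Using the $p$-semilinearity $\phi_x=\sum x_j^p\phi_{e_j}$ of Lemma \ref{psemi}, one checks $c_\sigma(x)^p=\sum x_j^p\sigma(\phi_{e_j})=\sigma(\phi_x)$ for every $x\in L$, so $\phi_x-c_\sigma(x)^p$ is nilpotent on $W_\sigma$. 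Hence every composition factor of $W_\sigma$ carries character $c_\sigma$, and comparison with Theorem \ref{SFdecomp} forces $\cl(V)=\{c_\sigma:\sigma\in\Hom_F(A,\bF)\}$, with $\sigma\mapsto c_\sigma$ a bijection because $c_\sigma$ determines $\sigma$ on the generators $\phi_{e_j}$.

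A direct computation gives $c_{\alpha\circ\sigma}=c_\sigma^\alpha$ for $\alpha\in\mathrm{Aut}(\bF/F)$, so the bijection above is $\mathrm{Aut}(\bF/F)$-equivariant. Because $A/F$ is a finite field extension and $\bF$ is algebraically closed, $\mathrm{Aut}(\bF/F)$ acts transitively on $\Hom_F(A,\bF)$, and hence on $\cl(V)$: this is simplicity. The main obstacle is the third-paragraph construction of $c_\sigma$: I need both the $p$-semilinearity of $\phi$ (to pass from the finitely many values $\sigma(\phi_{e_j})$ to an honest $F$-linear character satisfying the character equation on all of $L$) and the nilpotence of the maximal ideal of $B_\sigma$ (to get $\phi_x-c_\sigma(x)^p$ nilpotent on $W_\sigma$, rather than merely a formal candidate). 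Note that this route avoids Galois descent, which would be problematic when $F$ is imperfect since then the fixed field of $\mathrm{Aut}(\bF/F)$ properly contains $F$.
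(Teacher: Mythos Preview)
Your argument is correct and takes a genuinely different route from the paper's. The paper argues by descent: given $c\in\cl(V)$, it forms a polynomial $m_x(t)\in F[t]$ (a $p$-power of $\prod_\alpha(t-c^\alpha(x)^p)$, the power being needed precisely when $F$ is imperfect), observes that $V_0=\{v\in V\mid m_x(\phi_x)^r v=0\text{ for all }x\in L\}$ is an $L$-submodule, and shows it is nonzero because the defining $F$-linear equations have the nonzero solution space $\sum_\alpha\bV_{c^\alpha}$ over $\bF$; irreducibility then forces $V_0=V$, so every character in $\cl(V)$ is a conjugate of $c$. Your approach instead invokes Schur's lemma to realise the commuting operators $\phi_{e_j}$ as generators of a finite field extension $A/F$ inside $\End_L(V)$, and then identifies $\cl(V)$ bijectively and $\mathrm{Aut}(\bF/F)$-equivariantly with $\Hom_F(A,\bF)$, reducing simplicity to the classical transitivity of $\mathrm{Aut}(\bF/F)$ on embeddings of a finite extension into the algebraic closure. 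The paper's method is more hands-on and has the virtue of reusing exactly the polynomial machinery $m_x(\phi_x)$ that later drives the cluster decomposition (Theorem~\ref{decomp}) and the amenability of irreducibles (Lemma~\ref{irredamen}); your method is more structural and delivers the extra information $\cl(V)\leftrightarrow\Hom_F(A,\bF)$, making both the cardinality and the Galois structure of the cluster transparent.
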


\begin{proof}  $\bV = \bF \otimes_FV$ has a direct decomposition $\bV = \sum_c \bV_c$ where the component $\bV_c$ is, by Remark \ref{Lonly}, the space 
$$\{v \in \bV \mid (\phi_x - c(x)^p1)^r v = 0 \text{ for all $x \in L$ and some } r\}.$$
Here, we may take for $r$ the length of a composition series of $\bV_c$, which is independent of $x$.   Let $c \in \cl(V)$.  Let $\bV_0 = \sum_\alpha \bV_{c^\alpha}$ where the sum is over the distinct conjugates $c^\alpha$.  Let $f_x(t) = \Pi_\alpha(t-c^\alpha(x)^p)$.  The coefficients of $f_x(t)$ are invariant under the automorphisms of $\bF/F$. Therefore for some $k$, we have that $f_x(t)^{p^k}$ is a polynomial over $F$.  As the field is not assumed to be perfect, this may  require $k > 0$.  Let $m_x(t)$ be the least power of $f_x(t)$ which is a polynomial over $F$.  Then, with $r$ the length of a composition series of $\bV_c$, we have
$$\bV_0 = \{v \in \bV \mid m_x(\phi_x)^r v =0 \text{ for all }x \in L\}.$$

The condition $m_x(\phi_x)^r v =0$ for all $x \in L$ may be regarded as a set of linear equations over $F$ in the coordinates of $v$.  These equations have a non-zero solution over $\bF$ since $\bV_c \ne 0$.  Therefore, they have a non-zero solution over $F$, that is, 
$$V_0 = \{v \in V \mid m_x(\phi_x)^r v =0 \text{ for all }x \in L\} \ne 0.$$
Since the $\rho_y$ commute with the $\phi_x$,  $V_0$ is a submodule of $V$.  Therefore $V_0=V$ and it follows that the set of conjugates of $c$ is the whole of $\cl(V)$.
\end{proof}

\section{The cluster decomposition}
We have seen that if $c \in \cl(V)$, then every conjugate $c^\alpha$ of $c$ is in $\cl(V)$.  It is convenient to expand our terminology and call any finite set $C$ of linear maps $c:L \to \bF$ a cluster if, for each $c \in C$, all conjugates of $c$ are in $C$.  With this expansion of our terminology, every cluster is a union of simple clusters.

\begin{theorem} \label{decomp}  Let $(L,\p)$ be a restricted Lie algebra and let $V$ be an $L$-module.  Suppose that $\cl(V)$ is the union $C_1 \cup \dots \cup C_k$ of the distinct simple clusters $C_i$.  Then $V = V_1 \oplus \dots \oplus V_k$ of submodules $V_i$ such that $\cl(V_i) = C_i$.
\end{theorem}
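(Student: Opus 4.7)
The plan is to mirror the construction used in the proof of Theorem \ref{simple}, indexed this time by each simple cluster $C_i$, and then push the resulting decomposition $\bV=\bigoplus_i \bV_i$ over $\bF$ down to $V$ by a flat base-change argument. Specifically, for each $i$ I pick a representative $c_i\in C_i$, form $f_{x,i}(t)=\prod_\alpha(t-c_i^\alpha(x)^p)$ over the distinct conjugates of $c_i$, and let $m_{x,i}(t)$ be the least $p$-power of $f_{x,i}(t)$ with coefficients in $F$. Choosing $r$ at least $\dim_F V$, define
$$V_i=\{v\in V\mid m_{x,i}(\phi_x)^r v=0\text{ for all }x\in L\}.$$
Because each $\phi_x$ commutes with every $\rho_y$, each $V_i$ is automatically an $L$-submodule of $V$.

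The core step is then to show $\bF\otimes_F V_i=\bV_i$, where $\bV_i=\sum_{c\in C_i}\bV_c$ collects the character components of $\bV$ furnished by Theorem \ref{SFdecomp}. The subspace $V_i$ is cut out of $V$ by an $F$-linear system, exactly as in the proof of Theorem \ref{simple}. Since $V$ is finite dimensional, only finitely many of these equations are needed to describe $V_i$, and flatness of $\bF/F$ then identifies $\bF\otimes V_i$ with the $\bF$-solution space of the same system; by the argument already used for Theorem \ref{simple} together with Remark \ref{Lonly}, that solution space is exactly $\bV_i$.

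Once this identification is in hand, the remaining assertions fall out formally. Since $\bV=\bigoplus_i \bV_i$ and $\bF\otimes V_i=\bV_i$, faithfully flat descent gives $V=\bigoplus_i V_i$: both $\sum_i V_i=V$ and $V_i\cap\sum_{j\ne i}V_j=0$ translate into the known identities over $\bF$ after extending scalars. Finally, $\cl(V_i)$ is read off the composition factors of $\bV_i=\bF\otimes V_i$, yielding $\cl(V_i)=C_i$.

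The main obstacle is the base-change step: one must verify that the (a priori infinite) intersection defining $V_i$ behaves correctly under $\otimes_F\bF$. The trick is to use finite dimensionality of $V$ to reduce to finitely many kernels of $F$-linear endomorphisms, after which flatness of $\bF$ over $F$ delivers the required identification and everything else is bookkeeping.
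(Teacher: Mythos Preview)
Your proposal is correct and follows essentially the same route as the paper: define $V_i$ by the $F$-linear system $m_{x,i}(\phi_x)^r v=0$, identify the $\bF$-solution space with $\bV_i=\sum_{c\in C_i}\bV_c$ via Remark~\ref{Lonly}, and then descend the decomposition $\bV=\bigoplus_i\bV_i$ to $V$. The only cosmetic difference is that the paper phrases the base-change step as an elementary dimension count (the $F$-dimension of the solution space in $V$ equals the $\bF$-dimension in $\bV$) rather than invoking flatness or faithfully flat descent, but the content is identical.
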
 

\begin{proof}  By Theorem \ref{SFdecomp}, $\bV$ is the direct sum over the set of characters $c$ of submodules $\bV_c$ whose composition factors all have character $c$.  By Remark \ref{Lonly},  $\bV_c$ is the space annihilated by some sufficiently high power of $(\phi_x - c(x)^p1)$ for all $x \in L$.

Suppose $c \in C_i$.  Put $\bV_i = \sum_\alpha \bV_{c^\alpha}$.  Some power $m_x(t)$ of $\Pi_\alpha(t - c^\alpha(x)^p)$ is a polynomial over $F$, and $\bV_i$ is the space annihilated by $m_x(\phi_x)^r$ for all $x \in L$ and some sufficiently large $r$.  Put 
$$V_i = \{v \in V \mid (m_x(\phi_x)^rv=0 \text{ for all }x \in L\}.$$

The set of conditions $m_x(\phi_x)^rv=0$ for all $x \in L$ may be regarded as a set of linear equations over $F$ in the coordinates of $v$, so the $F$-dimension of its solution space $V_i$ in $V$ is equal to the $\bF$-dimension of its solution space $\bV_i$ in   $\bV$.  It follows that $V = \oplus_i V_i$.
Clearly $V_i$ is a submodule of $V$ and $\cl(V_i) = C$. 
\end{proof}

\begin{theorem} \label{sndec} Suppose that $S \sn L$ and let $V$ be an $L$-module.  Then the components of the cluster decomposition $V = \oplus_C V_C$ with respect to $S$ are $L$-submodules. 
\end{theorem}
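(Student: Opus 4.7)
The plan is to combine the explicit characterization of the summands $V_C$ obtained in the proof of Theorem \ref{decomp} with the centrality property of $x^p - x^\scp$ in $U(L)$ recorded in Section 2. Writing $\rho$ for the $L$-action on $V$ and regarding $V$ as an $S$-module by restriction, each summand of the cluster decomposition with respect to $S$ admits the description
$$V_C = \{v \in V \mid m_x(\phi_x)^r v = 0 \text{ for all } x \in S\},$$
with $m_x(t) \in F[t]$ built from the characters comprising $C$, and $r$ a bound on the composition length, exactly as in the proof of Theorem \ref{decomp}.

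The crux is that for each $x \in S \subseteq L$, the element $x^p - x^\scp$ lies in the centre of $U(L)$, hence $\phi_x = \rho(x)^p - \rho(x^\scp)$ commutes with $\rho(y)$ for every $y \in L$, not merely for $y \in S$. Therefore $m_x(\phi_x)^r$ commutes with $\rho(y)$ as well, and given any $v \in V_C$ and $y \in L$ we obtain
$$m_x(\phi_x)^r(yv) = y \cdot m_x(\phi_x)^r v = 0$$
for all $x \in S$, so $yv \in V_C$ and $V_C$ is $L$-invariant.

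I do not anticipate any substantive obstacle: once the summands are presented as simultaneous kernels of operators built from the $\phi_x$, stability under $L$ is a direct consequence of the commutation relation recalled in Section 2. The one point that requires attention is checking that the $p$-operation on $S$ used in forming $\phi_x$ coincides with the restriction of the $p$-operation on $L$, so that the centrality statement may legitimately be invoked with $y$ ranging over all of $L$; this compatibility is what the hypothesis $S \sn L$ is there to supply.
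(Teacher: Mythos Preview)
Your argument is correct and in fact more direct than the paper's. The paper proceeds by induction along a subnormal chain $S=S_0\id S_1\id\dots\id S_n=L$: assuming $V_C$ is already $S_{i-1}$-stable, it shows that for $x\in S_i$ the map $v\mapsto xv+V_C$ is an $S$-module homomorphism $V_C\to (xV_C+V_C)/V_C$ (because $[s,x]\in S_{i-1}$), and then concludes from the cluster of the quotient that $xV_C\subseteq V_C$. Your route avoids this induction entirely: since each $\phi_x$ for $x\in S$ already commutes with $\rho(y)$ for every $y\in L$ (this being a statement in $U(L)$, not $U(S)$), the simultaneous-kernel description of $V_C$ is visibly $L$-stable. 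This not only shortens the proof but shows that the conclusion holds for an arbitrary subalgebra $S\subseteq L$, not just a subnormal one.

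One small correction to your closing paragraph: the hypothesis $S\sn L$ is \emph{not} what supplies the compatibility of $p$-operations. As the paper notes at the start of its proof, $S$ need not even be a $\p$-subalgebra; by convention the cluster decomposition with respect to $S$ is computed using the $\p$-operation of the ambient $L$, so $\phi_x=\rho(x)^p-\rho(x^{\scp})$ with $x^{\scp}\in L$. The centrality of $x^p-x^{\scp}$ in $U(L)$ then holds for every $x\in L$ unconditionally. So in your argument the subnormality hypothesis is simply unused, which is a feature rather than a gap.
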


\begin{proof}  Although $S$ need not be a restricted algebra, it is embedded in the restricted algebra $(L,\p)$ and the components are defined using the operation $\p$.  There exists a series $S = S_0 \id S_1 \id \dots \id S_n = L$.  We use induction over $i$ to prove that $V_C$ is an $S_i$-module.  Take $x \in S_i$ and consider $(xV_C+V_C)/V_C$.  For $s \in S$ and $v \in V_C$, we have $s(xv) = x(sv) + [s,x]v$.  But $[s,x] \in S_{i-1}$, so $[s,x]v \in V_C$.  Thus the map $v \mapsto xv+V_C \in (xV_C+V_C)/V_C$ is an $S$-module homomorphism.  Thus the character of every composition factor of $\bF \otimes_F((xV_C+V_C)/V_C)$ is in $C$, which implies that $xV_C \subseteq V_C$.
\end{proof}

\begin{remark}  The decomposition given by Theorem \ref{sndec} depends on the $p$-operation, not merely on the algebra $S$.  Changing the $p$-operation may change the decomposition as is shown by the following example.  This opens the possibility that, where the minimal $p$-envelope of $S$ has non-trivial centre, judicious variation of the $p$-operation may give useful different direct decompositions.
\end{remark}

\begin{example}  Let $L = \langle a_1, a_2 \mid [a_1,a_2] = 0 \rangle$ and let $V = \langle v_1, v_2 \rangle$ with $a_iv_i = v_i$ and $a_iv_j=0$ for $i \ne j$.  With $a_1^\scp = 0$ and $a_2^\scp = -a_1$, $V$ has the character $c$ with $c(a_1) = 0$ and $c(a_2) = 1$.  The cluster decomposition with respect to $(L,\p)$ is simply $V = V_c$.  However, with the $p$-operation $\pd$ with $a_i^{\scp'}= 0$, the submodule $\langle v_1 \rangle$ has character $c_1$ with $c_1(a_1) = 1$ and $c_1(a_2) = 0$, while $\langle v_2\rangle$ has character $c_2$ with $c_2(a_1)=0$ and $c_2(a_2) = 1$.  This gives the cluster decomposition $V = V_{c_1} \oplus V_{c_2}$.
\end{example}

\section{$\fF$-central and $\fF$-eccentric modules}
Let $\fF$ be a saturated formation of soluble Lie algebras over $F$.  Comparing Theorem \ref{sndec} with \cite[Lemma 1.1]{extras} suggests a further relationship between clusters and saturated formations beyond that of \cite[Theorem 6.4]{extras}.

\begin{theorem} \label{Fcent}  Let $\fF$ be a saturated formation and suppose $S \in \fF$.  Let $(L,\p)$ be a $p$-envelope of $S$ and suppose that $z^\scp = 0$ for all $z $ in the centre of $L$.  Let $V,W$ be irreducible $S$-modules.  Suppose that $\cl(V) = \cl(W)$.  Then $V,W$ are either both $\fF$-central or both $\fF$-eccentric.
\end{theorem}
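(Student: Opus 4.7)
My plan is to extend $V$ and $W$ to $L$-modules via the $p$-envelope, assemble them into a single $L$-module $M=V\oplus W$ whose cluster is the common simple cluster $\cl(V)=\cl(W)$, and then transfer $\fF$-centrality between them across $M$. The hypothesis $z^\scp=0$ on $Z(L)$ is what allows me to align the two extensions compatibly: the character equation $\rho(z)^p-\rho(z^\scp)=c(z)^p\cdot 1$ collapses on $Z(L)$ to $\rho(z)^p=c(z)^p\cdot 1$, so $z$ acts on any composition factor of $\bar M$ as exactly the scalar $c(z)$, with no ambiguity from a $[p]$-twist.

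With $M$ in hand, I would form the split extension $M\rtimes L$, an ordinary Lie algebra with abelian ideal $M$, containing the sub-ideal $M\rtimes S$. By definition, $V$ is $\fF$-central precisely when $V\rtimes S\in\fF$, and similarly for $W$, so it suffices to show $V\rtimes S\in\fF$ if and only if $W\rtimes S\in\fF$. The main tool is \cite[Lemma~1.1]{extras}---the result whose comparison with Theorem~\ref{sndec} is flagged in the introductory remark of this section---applied to the chief factors of $M\rtimes L$ sitting inside $M$. That lemma together with Theorem~\ref{sndec} should imply that the $\fF$-central part of $M$ (inside the split extension) is an $L$-submodule and, restricted to a single simple cluster like $\cl(M)$, is therefore either all of $M$ or zero. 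Intersecting with the two summands $V$ and $W$ then yields the desired dichotomy.

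The main obstacle is making precise the assertion that ``the $\fF$-central part of a single-cluster module is an $L$-submodule.'' Theorem~\ref{sndec} supplies half of this, since for $S\sn L$ (which holds because $S$ is an ideal of its $p$-envelope $L$) the cluster decomposition of any $L$-module consists of $L$-submodules. The other half is the formation-theoretic input from \cite[Lemma~1.1]{extras}: an irreducible chief factor with a given character is $\fF$-central or $\fF$-eccentric purely in terms of how the character reads against the formation function of $\fF$. The condition $z^\scp=0$ on $Z(L)$ enters precisely here---without it, two irreducibles sharing a character could still differ by a $p$-th root of $c(z^\scp)$ in their central action, and the formation function could then separate them, breaking the transfer. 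Once this ``character determines $\fF$-centrality'' principle is secured, the conclusion for $V$ and $W$ follows immediately from the fact that $\cl(V)=\cl(W)$ is a single simple cluster.
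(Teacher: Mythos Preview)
Your plan diverges substantially from the paper's argument and, as written, contains a genuine gap at exactly the point you flag as the main obstacle. The paper does not attempt to show directly that $\fF$-centrality is a cluster invariant. Instead it argues by contradiction through $\Hom(V,W)$: if $V$ is $\fF$-central and $W$ is $\fF$-eccentric, then $\Hom(V,W)$ is $\fF$-hypereccentric by \cite[Theorem~2.3]{B6}; the characters of the composition factors of $\Hom(\bV,\bW)$ are the differences $c_2-c_1$ with $c_1\in\cl(V)$, $c_2\in\cl(W)$, so $\cl(V)=\cl(W)$ forces $0\in\cl(\Hom(V,W))$; and \cite[Theorem~6.4]{extras} (this is where $z^\scp=0$ on $Z(L)$ is invoked) guarantees that a composition factor with cluster $\{0\}$ is $\fF$-central, contradicting hypereccentricity.

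Your approach works inside $M=V\oplus W$ instead, and the step ``$M$ has a single simple cluster and the $\fF$-central part of $M$ is an $L$-submodule, therefore that part is $0$ or $M$'' is not valid. If $V$ is $\fF$-central and $W$ is $\fF$-eccentric, the $\fF$-hypercentral part of $M$ is exactly $V$, a proper nonzero $L$-submodule, and nothing you cite excludes this. Theorem~\ref{sndec} is empty here, since the cluster decomposition of a single-cluster module is the module itself. The principle you attribute to \cite[Lemma~1.1]{extras}---that the character alone determines whether an irreducible is $\fF$-central---is essentially the statement being proved; the external input actually on offer is only the special case of the \emph{zero} character, via \cite[Theorem~6.4]{extras}. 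The $\Hom$ construction is precisely what reduces the general case to that special one, and your outline has no substitute for it.
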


\begin{proof}  Suppose to the contrary, that $V$ is $\fF$-central and that $W$ is $\fF$-eccentric. By \cite[Theorem 2.3]{B6}, $\Hom(V,W)$ is $\fF$-hypereccentric.  But from \cite[Theorem 5.2.7]{SF} it follows that the characters of the composition factors of $\Hom(\bV,\bW)$ are all of the functions $c_2-c_1$ where $c_1 \in \cl(V)$ and $c_2 \in \cl(W)$.  Since $\cl(V) = \cl(W)$, we have that $0 \in \cl(\Hom(V,W))$.  

By assumption, we have that $z^\scp = 0$ for all $z$ in the centre of $L$.  As $(L,\p)$ is a $p$-envelope of $S$, we have $S \ideq L$. By \cite[Theorem 6.4]{extras}, a composition factor $X$ of $\Hom(V,W)$ with $\cl(X) = \{0\}$ is $\fF$-central, contrary to $\Hom(V,W)$ being $\fF$-hypereccentric.
\end{proof}

\section{$C$-induced modules}
Let $(L,\p)$ be a restricted Lie algebra over the perfect field $F$ and let $S$ be a $\p$-subalgebra of $L$.  Let $W$ be an $S$-module and let $C$ be a cluster of characters of $L$ whose restriction to $S$ is $\cl(W)$.  We require that distinct members of $C$ have distinct restrictions to $S$ in which case, we say that $C$ restricts simply to $S$.  

Note that, given a simple cluster $C_S$ of $S$, we can easily construct a cluster $C$ of $L$ which restricts simply to $C_S$.  We take a cobasis $\{e_1, \dots, e_n\}$ of $S$ in $L$, that is, a basis of some subspace complementary to $S$.  A character $c: S \to \bF$ can be extended to $L$ by assigning arbitrarily the values $c(e_i) \in \bF$.  If these are chosen in $F[c]$, then any automorphism which fixes the given $c$ also fixes its extension. 

We want to apply the construction of $c$-induced modules (see Strade and Farnsteiner \cite[Section 5.6]{SF}) to the $c$-components $\bW_c$ of $\bW$.  This  construction only works for modules with character $c$.  Every compostion factor of $\bW_c$ has character $c$, but $\bW_c$ itself need not.  This leads to the following definition.

\begin{definition}  We say that the $S$-module $W$ is amenable (for induction) if, for all $c \in \cl(W)$, $\bW_c$ has character $c$.
\end{definition}

Note that if $\bW_c$ has character $c$, then for each conjugate $c^\alpha$ of $c$, $\bW_{c^\alpha}$ has character $c^\alpha$ .  It would be nice to have a way of determining if a module $W$ is amenable which does not require analysis of $\bW$.  The following lemmas achieve that.  

\begin{lemma} \label{ambasis} Let $\{s_1, \dots, s_n\}$  be a basis of $S$ and let $W$ be an $S$-module.  Let $m_i(t)$ be the minimum polynomial of $\phi_{s_i}$. Then $W$ is amenable if and only if for all $i$, $\gcd(m_i(t),m'_i(t)) = 1$.
\end{lemma}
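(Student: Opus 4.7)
The plan is to reduce amenability to the statement that each $\phi_{s_i}$ acts diagonalizably on $\bW$, and then to translate that condition into the asserted gcd criterion on the minimum polynomials.

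First I would unpack amenability in terms of the chosen basis. By Lemma \ref{psemi} and Remark \ref{Lonly}, both the definition of $\bW_c$ and the condition ``$\bW_c$ has character $c$'' need only be tested on $\{s_1,\dots,s_n\}$. Writing $T_i = \phi_{s_i}$, this means
$$\bW_c = \bigcap_i \ker(T_i - c(s_i)^p \cdot 1)^r$$
for sufficiently large $r$, while $\bW_c$ has character $c$ precisely when $\bW_c \subseteq \bigcap_i \ker(T_i - c(s_i)^p \cdot 1)$. Thus $W$ is amenable exactly when, for every $c \in \cl(W)$, the simultaneous generalized eigenspace coincides with the honest simultaneous eigenspace of the family $\{T_i\}$ corresponding to the eigenvalues $(c(s_i)^p)_i$.

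Next I would observe that the $T_i$ pairwise commute, since each $\phi_x$ commutes with every $\rho(y)$ (as noted before Lemma \ref{psemi}), hence in particular with every $\phi_y$. Over the algebraically closed field $\bF$, a commuting family of operators on a finite dimensional space has simultaneous generalized eigenspaces equal to simultaneous eigenspaces if and only if each operator is individually diagonalizable: if some $T_i$ admitted a nontrivial Jordan block on $\bW$, that block would sit inside some $\bW_c$ and give a vector annihilated by $(T_i - c(s_i)^p)^2$ but not by $T_i - c(s_i)^p$, destroying amenability; conversely, simultaneous diagonalizability of commuting diagonalizable operators is standard. Hence $W$ is amenable if and only if each $T_i$ is diagonalizable over $\bF$.

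Finally I would convert diagonalizability into the gcd condition. The minimum polynomial of $T_i$ on $\bW$ equals $m_i(t)$, because it is read off from the matrix of $T_i$ in any $F$-basis of $W$. Now $T_i$ is diagonalizable over $\bF$ iff $m_i(t)$ is squarefree in $\bF[t]$, iff $m_i$ is separable. Since $F$ is perfect, every irreducible factor of $m_i \in F[t]$ is automatically separable, so $m_i$ has no repeated root in $\bF$ iff $m_i$ has no repeated irreducible factor over $F$, iff $\gcd(m_i(t), m'_i(t)) = 1$. The step that requires the most care is the passage between individual diagonalizability of the commuting operators $T_i$ and the coincidence of simultaneous generalized eigenspaces with simultaneous eigenspaces; once that equivalence is in hand, the remainder is standard linear algebra over a perfect field.
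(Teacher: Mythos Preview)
Your argument is correct and follows essentially the same route as the paper: amenability is equivalent to each $\phi_{s_i}$ acting semisimply on $\bW$, which is then translated into the squarefree/gcd condition on $m_i$. The only difference is in the last step: you invoke perfectness of $F$ (available under this section's standing hypothesis) to pass through ``no repeated irreducible factor over $F$,'' whereas the paper simply observes that the Euclidean-algorithm computation of $\gcd(m_i,m_i')$ is the same over $F$ as over $\bF$, which incidentally shows the lemma holds even without assuming $F$ perfect.
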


\begin{proof}   The module  $W$ is amenable if and only if, for all $c \in \cl(W)$ and all $i$, we have $(\phi_{s_i} - c(s_i)^p1)\bW_c = 0$.  So $W$ is amenable if and only if for all $i$, in $\bF[t]$, $m_i(t)$ has no repeated factors, that is, if and only if $\gcd(m_i(t),m'_i(t)) = 1$.  As the calculation of $\gcd(m_i(t),m'_i(t))$ in $F[t]$ is the same as in $\bF[t]$, the result follows.
\end{proof}

\begin{lemma} \label{irredamen}  Let $W$ be an irreducible $S$-module.  Then $W$ is amenable.
\end{lemma}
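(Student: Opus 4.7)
The plan is to combine the Schur-type information that $\phi_{s_i}$ commutes with the $S$-action with the perfectness hypothesis, and then invoke Lemma \ref{ambasis}.

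First I would observe that, as noted before Lemma \ref{psemi}, the element $\phi_x = \rho(x)^p - \rho(x^\scp)$ commutes with $\rho(y)$ for every $y \in S$. Consequently, for each basis element $s_i$ of $S$, the linear map $\phi_{s_i} \in \End_F(W)$ actually lies in $\End_S(W)$. Since $W$ is a finite-dimensional irreducible $S$-module, Schur's lemma tells us that $\End_S(W)$ is a (finite-dimensional) division algebra over $F$.

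Next I would consider the $F$-subalgebra $F[\phi_{s_i}] \subseteq \End_S(W)$. Being a commutative, finite-dimensional $F$-subalgebra of a division algebra, $F[\phi_{s_i}]$ is an integral domain and hence a field. Thus $F[\phi_{s_i}] \cong F[t]/(m_i(t))$, which forces the minimum polynomial $m_i(t)$ of $\phi_{s_i}$ to be irreducible over $F$.

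Since we are working over a perfect field $F$, every irreducible polynomial in $F[t]$ is separable; equivalently $\gcd(m_i(t), m_i'(t)) = 1$. Applying Lemma \ref{ambasis} then gives that $W$ is amenable. The only place where any real content enters is the passage from ``irreducible $S$-module'' to ``minimum polynomial of $\phi_{s_i}$ is irreducible over $F$,'' which is precisely the Schur argument above; the perfectness hypothesis on $F$ then converts irreducibility of $m_i(t)$ into the separability condition required by Lemma \ref{ambasis}.
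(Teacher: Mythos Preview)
Your argument is correct. It differs from the paper's proof, though both ultimately rest on the same two facts: that each $\phi_s$ lies in $\End_S(W)$, and that $F$ is perfect in this section.

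The paper does not invoke Schur's lemma. Instead, for $c \in \cl(W)$ it forms the separable polynomial $f_s(t) = \prod_\alpha (t - c^\alpha(s)^p)$ over the distinct conjugates of $c(s)$, notes (using perfectness) that $f_s \in F[t]$, and shows that $W_0 = \{w \in W \mid f_s(\phi_s)w = 0 \text{ for all } s\}$ is a nonzero $S$-submodule by the ``linear equations over $F$ with solutions over $\bar F$'' trick already used in Theorems~\ref{simple} and~\ref{decomp}. Irreducibility of $W$ forces $W_0 = W$, so each $\phi_s$ is annihilated by a polynomial with simple roots, and amenability follows.

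Your route via Schur's lemma is shorter and more conceptual: it yields directly that each $m_i$ is \emph{irreducible}, which is a sharper conclusion than the paper's ``$m_i$ divides a separable polynomial'', and it plugs straight into Lemma~\ref{ambasis}, which was set up for exactly this purpose. The paper's argument, by contrast, stays within the uniform toolkit (descent of linear conditions from $\bar F$ to $F$) used throughout the paper and does not rely on the finite-dimensional Schur lemma; it also makes the relevant annihilating polynomial $f_s$ explicit. Either approach needs perfectness at the same point --- you to pass from irreducible to separable, the paper to ensure $f_s \in F[t]$ without taking a $p$-power.
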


\begin{proof}  For any $s \in S$, $s^p - s^\scp$ is in the centre of the universal enveloping algebra of $S$ and so, for any representation $\rho$ of $S$, we have $[\rho(s_1)^p - \rho(s_1^\scp), \rho(s_2)] = 0$ for all $s_1,s_2 \in S$.  For $c \in \cl(W)$, put $f_s(t) = \Pi (t-c^\alpha(s)^p)$ where the product is taken over the distinct conjugates of $c(s)$.  Then $f_s(t)$ is a polynomial over $F$, and $\rho(s_2)$ commutes with $f_{s_1}(\phi_{s_1})$ for all $s_1,s_2 \in S$.  Thus $W_0 = \{w \in W \mid f_s(\phi_s) w = 0\text{ for all } s \in S\}$ is a submodule of $W$.  The conditions $f_s(\phi_s)w = 0$ are linear equations over $F$ with non-zero solutions over $\bF$ and so have non-zero solutions over $F$.  Thus $W_0 \ne 0$ which implies $W_0 = W$.
\end{proof}

As the construction being developed can be applied separately to each direct summand of $W$, we suppose that $C$ is simple.  Take a basis $\{b^1, \dots, b^k\}$ of $W$.  Corresponding to each $c \in C$, we have a component $\bW_c$ of $\bW = \oplus_\alpha \bW_{c^\alpha}$.  For each $w \in \bW$, we have $w = \sum_c  w_c$ with $w_c \in \bW_c$.

\begin{lemma} \label{inv}  Let $w = \sum \lambda_i b^i \in \bW$.  Then $w$ is invariant under the automorphisms of $\bF/F$ if and only if the $\lambda_i \in F$, in which case, $(w_c)^\alpha = w_{c^\alpha}$.  Further, $sw$ is also invariant for all $s \in S$.
\end{lemma}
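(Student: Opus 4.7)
The plan is to handle the three assertions in sequence. The first and third are essentially formal, drawing on $F$ being perfect together with the fact that $\rho(s)$ and the $p$-semilinear operator $\phi_x$ both act on $W$ by matrices with entries in $F$ in the basis $\{b^1,\dots,b^k\}$. The middle assertion $(w_c)^\alpha = w_{c^\alpha}$ carries the real content and reuses the commutation argument already employed in the proof of Lemma \ref{conj}.

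For the biconditional, an automorphism $\alpha$ of $\bF/F$ sends $w = \sum \lambda_i b^i$ to $w^\alpha = \sum \lambda_i^\alpha b^i$. So $w^\alpha = w$ for every $\alpha$ is equivalent to each $\lambda_i$ lying in the fixed field of the full automorphism group of $\bF/F$; because $F$ is perfect, this fixed field is exactly $F$.

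Next I would verify that $\alpha$ carries $\bW_c$ into $\bW_{c^\alpha}$. Since $\phi_x$ is given by an $F$-matrix in the basis $\{b^i\}$, it commutes with the coordinatewise action of $\alpha$ on $\bW$. Applying $\alpha$ to the defining relation $(\phi_x - c(x)^p 1)^r v = 0$ and using $(c(x)^p)^\alpha = c^\alpha(x)^p$ yields $(\phi_x - c^\alpha(x)^p 1)^r v^\alpha = 0$, placing $v^\alpha$ in $\bW_{c^\alpha}$. Given $w = \sum_c w_c$, invariance of $w$ gives $w = w^\alpha = \sum_c (w_c)^\alpha$ with $(w_c)^\alpha \in \bW_{c^\alpha}$, and uniqueness of the cluster decomposition forces $(w_c)^\alpha = w_{c^\alpha}$.

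The final claim then follows immediately: since $s \in S$ acts on $W$ by an $F$-linear map, its matrix on $\{b^i\}$ has entries in $F$, so $(sw)^\alpha = s w^\alpha = sw$. The only step one has to be careful about is the commutation of $\alpha$ with $\phi_x$ and with $\rho(s)$; this is not deep, but it is the point at which the perfectness of $F$ and the choice of a basis of $W$ over $F$ are genuinely used. Once that compatibility is in hand, no further obstacle arises.
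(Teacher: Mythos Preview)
Your proof is correct and follows essentially the same approach as the paper, which is considerably terser: the paper simply asserts that $\alpha$ permutes the $\bW_c$ and preserves the direct decomposition, which is exactly what you unpack via the commutation of $\alpha$ with $\phi_x$. One small quibble in your closing commentary: perfectness of $F$ is used only in the fixed-field step of the biconditional, not in the commutation arguments, which rely solely on $\phi_x$ and $\rho(s)$ having $F$-matrices in the basis $\{b^i\}$.
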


\begin{proof}  If $w = \sum \lambda_i b^i $ is invariant, then $\lambda_i$ is invariant.  Since $F$ is perfect, this implies $\lambda_i \in F$.  If $\lambda_i \in F$ for all $i$, then clearly $w$ is invariant.  As $W$ is an $S$-module, also $sw$ is invariant.  The action of $\alpha$ permutes the $\bW_c$ and does not change the direct decomposition.  That $(w_c)^\alpha = w_{c^\alpha}$ follows.
\end{proof}

Suppose that $C$ is a simple cluster of characters of $L$ which restricts simply to $S$ and that $W$ is an amenable $S$-module with $\cl(W) = C|S$. For each $c \in C$, we form the $c$-reduced enveloping algebras $u(L,c)$ and $u(S,c)$.  (See Strade and Farnsteiner \cite{SF} page 226.)  Since $W$ is amenable, we can construct the $c$-induced $\bL$-modules
$$\bV_c= \ind^{\bL}_{\bS}(\bW_c, c) = u(\bL,c) \otimes_{u(\bS,c)} \bW_c$$
and put $\bV = \oplus_c \bV_c$.   From  $\bV$, we shall select an $F$-subspace $V$ with $\bF \otimes_F V=\bV$, which we shall show to be an $L$-module with $\cl(V) = C$. 

For $x,y \in L$ in the following,  we need to distinguish  their product in the associative algebra $u(L,c)$ from their product in the Lie algebra.  We denote the Lie algebra product by $[x,y]$.  Take a cobasis $\{e_1, \dots, e_n\}$ for $S$ in $L$.  Then elements $e_1^{r_1}e_2^{r_2} \dots e_n^{r_n} \otimes w_c$ with $r_i \le p-1$ and $w_c \in \bW_c$ span $\bV_c$.  To simplify the notation, we write $e(r)$ for $e_1^{r_1}e_2^{r_2} \dots e_n^{r_n}$.    For an element $w = \sum_c w_c \in \bW$, it is convenient to abuse notation and write  $e(r) \otimes w$ for the element $\sum_c e(r)\otimes w_c$.  It should be remembered that in this sum, the $e(r)$ come from different algebras $u(\bL,c)$ with different multiplication, and that the tensor products are over different algebras $u(\bS,c)$.

Any element $w_c \in \bW_c$ is an $\bF$-linear combination of the $b^i$, so an element of $\bV_c$ is expressible as an $\bF$-linear combination of the $e(r) \otimes b^i$.  It follows that the   $e(r) \otimes b^i$ form a basis of $\bV$.  An automorphism $\alpha$ maps $e(r) \otimes w$ to $e(r) \otimes w^\alpha$.  Thus the invariant elements of $\bV$ are the $F$-linear combinations of the basis.

\begin{lemma} \label{xinv}  Let $v \in \bV$ be invariant.  Then $xv$ is invariant for all $x \in L$.
\end{lemma}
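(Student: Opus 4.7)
The plan is to show that for every automorphism $\alpha$ of $\bF/F$ and every $x \in L$, the action of $x$ on $\bV$ commutes with $\alpha$, in the sense that $\alpha(xu) = x\alpha(u)$ for all $u \in \bV$. Granted this, if $v$ is invariant then $\alpha(xv) = x\alpha(v) = xv$, so $xv$ is invariant.

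First, I would make the semilinear action of $\alpha$ on $\bV$ explicit. For each $c \in C$, $\alpha$ induces a semilinear isomorphism $u(\bL,c) \to u(\bL,c^\alpha)$, since the defining relation $y^p - y^{[p]} = c(y)^p\cdot 1$ of $u(\bL,c)$ transforms under $\alpha$ to $y^p - y^{[p]} = c^\alpha(y)^p\cdot 1$, which is the defining relation of $u(\bL,c^\alpha)$; similarly one obtains $u(\bS,c) \to u(\bS,c^\alpha)$. Combined with the semilinear map $w_c \mapsto w_{c^\alpha}$ provided by Lemma \ref{inv}, these assemble into a well-defined semilinear map $\alpha_c\colon \bV_c \to \bV_{c^\alpha}$ determined on generators by $\alpha_c(a \otimes w) = a^\alpha \otimes w^\alpha$. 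Summing over $c \in C$ gives the semilinear map $\alpha\colon \bV \to \bV$, which on the basis $\{e(r) \otimes b^i\}$ reduces to the description given just before the lemma.

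The commutation identity is then nearly tautological. Since $x \in L$ has coefficients in $F$ (with respect to a basis of $L$), its image under $\alpha\colon u(\bL,c) \to u(\bL,c^\alpha)$ is $x$ itself. Hence, for a generator $a \otimes w$ of $\bV_c$,
\[
\alpha_c(x \cdot (a \otimes w)) = (xa)^\alpha \otimes w^\alpha = x \cdot (a^\alpha \otimes w^\alpha) = x \cdot \alpha_c(a \otimes w),
\]
and the identity extends by semilinearity to all of $\bV_c$, and hence to $\bV$.

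The main obstacle is the bookkeeping around the fact that the pieces $\bV_c$ are built from distinct algebras $u(\bL,c)$ whose multiplications and tensor identifications depend on $c$. Concretely, one must verify that $\alpha_c$ respects the tensor relation $as \otimes w = a \otimes sw$ for $s \in u(\bS,c)$, which in turn reduces to compatibility of the Galois action with the $S$-action on $\bW_c$, a direct consequence of Lemma \ref{inv}. Once this compatibility is in place, no further calculation is needed.
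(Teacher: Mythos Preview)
Your argument is correct and takes a genuinely different route from the paper's. The paper proves the lemma by an explicit induction on the length $k$ of monomials $x_1\cdots x_k \otimes b^i$: it rewrites such a monomial via the commutation rules and the cobasis/$S$ splitting of $L$, and handles the critical case $r_j \ge p$ by expanding $e_j^p = e_j^{\scp} + c^\alpha(e_j)^p 1$ separately in each $u(\bL,c^\alpha)$ and checking that the resulting sum over $\alpha$ remains invariant. You instead build the Galois action on $\bV$ functorially, as the tensor product of the canonical semilinear isomorphisms $u(\bL,c)\to u(\bL,c^\alpha)$ and $\bW_c\to\bW_{c^\alpha}$, and then observe that left multiplication by $x\in L$ commutes with it simply because $x^\alpha = x$. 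Your approach is cleaner and sidesteps the inductive bookkeeping entirely; the paper's hands-on computation, by contrast, makes visible exactly how the relation $e_j^p = e_j^{\scp} + c(e_j)^p 1$ interacts with invariance, which feeds directly into the subsequent Remark explaining when the action on $\ind^L_S(W,C)$ can be computed without the character decomposition of $\bW$. One minor point: the semilinear map $\bW_c\to\bW_{c^\alpha}$ and its compatibility with the $S$-action that you invoke are not really the content of Lemma~\ref{inv} (which concerns the components of \emph{invariant} elements); they are the more basic facts---implicit in the proof of Lemma~\ref{conj}---that the standard Galois action on $\bW=\bF\otimes_F W$ permutes the $\bW_c$ and commutes with the $S$-action because $S$ acts by matrices over $F$.
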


\begin{proof}  We use induction over $k$ to show that $x_1 \dots x_k \otimes b^i$ is invariant for all $x_1 , \dots, x_k \in L$.  The result then follows trivially.

For $s \in S$, we have $s(1 \otimes b^i) = 1 \otimes sb^i$ which is invariant by Lemma \ref{inv}.  For $e_j$, we have $e_j(1 \otimes b^i) = e_j \otimes b^i$ which is invariant.  Note that in this case, the multiplication is the same in all the $u(\bL,c)$.  Thus the result holds for $k=1$.

Suppose that $k > 1$.  We express each of the $x_t$ as a linear combination of the $e_j$ and an element of $S$.  We then use the commutation rules $xy -yx = [x,y]$ to move each factor to its correct position, giving a sum of terms of the form $e_1^{r_1}e_2^{r_2} \dots e_n^{r_n} s_1 \dots s_m\otimes b^i$, but with the $r_j$ not restricted to be less than $p$.  The terms coming from a commutator $[x,y]$ all have fewer than $k$ factors and so are invariant.  Any elements of $S$ at the end moves past the tensor product, giving  $e_1^{r_1}e_2^{r_2} \dots e_n^{r_n} \otimes s_1 \dots s_m b^i$.  By Lemma \ref{inv}, $1 \otimes s_1 \dots s_m b^i$ is invariant and since, in this case, $e_1^{r_1} \dots e_n^{r_n}$ has fewer than $k$ factors, the term is invariant.   Thus we are left to consider terms of the form $e_1^{r_1} \dots e_n^{r_n} \otimes b^i$.  If $r_j <p$ for all $j$, then the term is one of our basis elements and so is invariant.

Suppose that for some $j$,  we have $r_j \ge p$.  Then we must separate the summands. The term can be written in the form $e e_j^p e' \otimes b^i$ where $e, e'$ are strings of cobasis elements.  In the algebra $u(\bL, c^\alpha)$, we have $e_j^p = e_j^\scp + c^\alpha(e_j)^p1$.  Thus
$$e e_j^p e' \otimes b^i = e e_j^\scp e' \otimes b^i + \sum_\alpha ee' \otimes c^\alpha(e_j)^p b^i_{c^\alpha}.$$ 
But $e e_j^\scp e' \otimes b^i$  has fewer than $k$ factors and so invariant. As $ \sum_\alpha 1 \otimes c^\alpha(e_j)^p b^i_{c^\alpha}$ is invariant and $ee'$ has fewer than $k$ factors, it follows that $e e_j^p e'\otimes b^i$ also is invariant.
\end{proof}

We define the $C$-induced module $\ind^L_S(W,C)$ to be the $F$-subspace of invariant elements of $\bV$.  By Lemma \ref{xinv}, it is an $L$-module.  Clearly $\cl(\ind^L_S(W,C)) = C$.

To illustrate this, we calculate a simple example.
\begin{example}  Let $F$ be the field of $3$ elements and let $L = \langle x,y \mid [x,y] = y \rangle$.  Putting $x^\scp = x$ and $y^\scp = 0$ makes this a restricted Lie algebra.  We take $S = \langle x \rangle$ and $W = \langle b^1,b^2\rangle$ with $xb^1 = b^2$ and $xb^2 = -b^1$.  Over $\bF$, we have $\bW = \bW_1 \oplus \bW_2$ with $\bW_1 = \langle -b^1-ib^2 \rangle$ and $\bW_2 = \langle -b^1 + ib^2 \rangle$, where $i \in \bF, i^2 = -1$.  Denote the action of $S$ on $W$ by $\rho$.  Then $\rho(x)(-b^1-ib^2) =-i(-b^1-ib^2)$ and $\rho(x)(-b^1+ib^2) = i(-b^1+ib^2)$.

We have $(\rho(x)^p - \rho(x^\scp))(-b^1-ib^2) = ((-i)^3 - (-i))(-b^1-ib^2) = -i(-b^1-ib^2)$.  Thus the character $c_1$ of $\bW_1$ must have $c_1(x)^3 = -i$, so $c_1(x) = i$.    Similarly, we have $c_2(x) = -i$.  As distinct conjugates of a character on $L$ in $C$ must have distinct restrictions to $S$, $c_1(y) \in F[i]$.  Put $c_1(y) = \lambda = \alpha + i\beta$ where $\alpha, \beta \in F$.  Then $c_2(y)=\bar{\lambda}$.  Note that in $u(\bL,c_1)$, $y^3 = \lambda^3 = \bar{\lambda}$.  In both the algebras, $xy = y + yx$ and 
$xy^2 =  (y+yx)y = y^2 + y(xy) = -y^2 + y^2x$.

In the notation used above, we have $b^1_{c_1} = -b^1-ib^2$ and $b^1_{c_2} = -b^1+ib^2$, while for $b^2$, we have $b^2_{c_1} = ib^1-b^2$ and $b^2_{c_2} = -ib^1-b^2$.  The induced module $V = \ind^L_S(W,C)$ has basis the six elements $v^r_j = y^r \otimes b^j$ for $r = 0 , 1, 2$ and $j = 1,2$.  We calculate the actions of $x,y$ on these elements.
\begin{align*}
xv_1^0 &= x(1 \otimes b^1) = v_2^0 & xv_2^0&=x(1\otimes b^2)=-v_1^0\\  
xv_1^1 &= (y+yx)\otimes b^1 =v_1^1+v_2^1 & xv_2^1&=(y+yx)\otimes b^2=v_2^1 - v_1^1\\ 
xv_1^2&=(-y^2+y^2x)\otimes b^1= -v_1^2 + v_2^2 & xv_2^2&= (-y^2+y^2x)\otimes b^2=-v_2^2-v_1^2\\
yv_1^0&=v_1^1 & yv_2^0&=v_2^1\\
yv_1^1&=v_1^2 & yv_2^1&=v_2^2
\end{align*}
The calculations of $yv_j^2$ are more complicated.
\begin{equation*}
\begin{split}
yv_1^2&= y^3\otimes (b_{c_1}^1 + b_{c_2}^1)= 1\otimes(\bar{\lambda} b_{c_1}^1 + \lambda b_{c_2}^1)\\
&=1 \otimes\bigl((\alpha -i\beta)(-b^1-ib^2) + (\alpha+i\beta)(-b^1+ib^2)\bigr)\\
&=1 \otimes (\alpha b^1 + \beta b^2) = \alpha v_1^0 + \beta v_2^0\\
yv_2^2&= y^3 \otimes(b_{c_1}^2 + b_{c_2}^2)= 1\otimes(\bar{\lambda} b_{c_2}^1 + \lambda b_{c_2}^2)\\
&=1 \otimes \bigl((\alpha - i\beta)(ib^1-b^2) + (\alpha+i\beta)(-ib^1-b^2)\bigr)\\
&=1 \otimes(-\beta b^1+\alpha b^2) =  -\beta v_1^0 + \alpha v_2^0
\end{split}
\end{equation*} 
\end{example}

\begin{remark}  As noted earlier, in the notation used above, if we are given an amenable $S$-module $W$ with simple cluster $C_S$, we can construct a simple cluster $C$ on $L$ which restricts simply to $C_S$ by choosing arbitrarily the $c_1(e_i)$ in $F[c_1]$.  If we choose the $c_1(e_i)$ in $F$, then we have $c_j(e_i) = c_1(e_i)$ for all $j$.  This simplifies the calculation of the action on the induced module as we then have $e_i^p = e_i^\scp + c(e_i)^p1$ in all the algebras $u(\bL,c_i)$ and it follows that $e_i^pb^j = (e_i^\scp +c(e_i)^p1)b^j$ can be calculated without using the character decomposition of $\bW$.  That the action of $x \in L$ on a basis element $e(r)\otimes b^i$ can be calculated without using the decomposition follows by an induction  as in the proof of Lemma \ref{xinv}. It thus becomes possible to calculate the action on $\ind_S^L(W,C)$ without having to determine the eigenvalues of the $\phi_{s_i}$.  In the above example, if we take $\beta = 0$, then the calculations of $yv_1^2$ and $yv_2^2$ simplify to $yv_i^2 = y^3b^i = \alpha^3b^i = \alpha v_i^0$.
\end{remark}

\begin{remark}  Denote the category of amenable $L$-modules with cluster $C$ by $\ammod(L,C)$. The restriction functor $\res_L^S: \ammod(L,C) \to \ammod(S,C|S)$ sends an $L$-module $V$ to $V$ regarded as an $S$-module. Suppose that $C$ restricts simply to $S$.  Then $\ind^L_S(\ , C)$ is a functor $\ammod(S,C|S) \to \ammod(L,C)$.   In the special case where $C=\{c\}$, $\ind^L_S(\ ,c)$ is a left adjoint to $\res^S_L$ by Strade and Farnsteiner \cite[Theorem 5.6.3] {SF}.  Applying this to the $\bW_c$ in the general case gives that $\ind^L_S(\ ,C)$ is a left adjoint to $\res^S_L$.
\end{remark}

\bibliographystyle{amsplain}

\end{document}